\documentclass{article}

\usepackage{amsmath, amssymb, amsthm, amsfonts, bm}
\usepackage{enumerate}
\usepackage{commath}
\usepackage{mathrsfs}
\usepackage[retainorgcmds]{IEEEtrantools}
\usepackage{verbatim}
\numberwithin{equation}{section}
\newcommand{\ud}{\, \mathrm{d}}

\newtheorem{cthm}{Theorem}

\newtheorem{lemma}{Lemma}[section]
\theoremstyle{definition}
\newtheorem{definition}{Definition}[section]
\theoremstyle{remark}

\newcommand{\Rnum}[1]{\uppercase\expandafter{\romannumeral #1\relax}}

\newcommand{\mr}[1]{\mathrm{#1}}
\newcommand{\mb}[1]{\mathbb{#1}}
\newcommand{\mc}[1]{\mathcal{#1}}

\DeclareMathOperator{\mt}{\mathcal{M}_\mathcal{T}}


\title{Sharp and general estimates for the Bellman function of three integral variables related to the dyadic maximal operator}
\author{Anastasios D. Delis, Eleftherios N. Nikolidakis}
\date{}

\begin{document}
\maketitle
\footnotetext{Keywords: Bellman, dyadic maximal function, integral inequality}
\footnotetext{ {\em E-mail addresses}: tdelis@math.uoa.gr, lefteris@math.uoc.gr}
\footnotetext{ {\em MSC Number}: 42B25}

\begin{abstract}
We compute the Bellman function of three integral variables associated to the dyadic maximal operator on a subset of its domain. Additionally, we provide an upper bound for the whole domain of its definition.
\end{abstract}

\section{Introduction}\label{sec:1}

It is well known that the dyadic maximal operator on $\mb R^n$ is a useful tool in analysis and is defined by
\begin{equation} \label{eq:1p1}
\mathcal{M}_d \phi(x) = \sup \left\{ \frac{1}{|Q|}\int_Q|\phi(y)|\,\mr dy: x\in Q,\ Q\subseteq \mb R^n\ \text{is a dyadic cube}\right\},
\end{equation}
for every $\phi\in L_\text{loc}^1(\mb R^n)$, where the dyadic cubes are those formed by the grids $2^{-N}\mb Z^n$, for $N=0, 1, 2,\ldots$.
It is also well known that it satisfies the following weak type (1,1) inequality
\begin{equation} \label{eq:1p2}
\left|\left\{x\in\mb R^n : \mathcal{M}_d \phi(x) > \lambda\right\}\right| \leq
\frac{1}{\lambda} \int_{\{\mathcal{M}_d\phi>\lambda\}}|\phi(y)|\,\mr dy,
\end{equation}
for every $\phi\in L^1(\mb R^n)$ and every $\lambda>0$, and which is easily proved to be best possible. Further refinements of \eqref{eq:1p2} can be seen
in \cite{10} and \cite{11}.

Then by using \eqref{eq:1p2} and the well known Doob's method it is not difficult to prove that the following $L^p$ inequality is also true
\begin{equation} \label{eq:1p3}
\|\mathcal{M}_d\phi\|_p \leq \frac{p}{p-1}\|\phi\|_p,
\end{equation}
for every $p>1$ and $\phi\in L^p(\mb R^n)$.
Inequality \eqref{eq:1p3} turns out to be best possible and its sharpness is proved in \cite{17} (for general martingales see \cite{1} and \cite{2}).

One way to study inequalities satisfied by maximal operators is by using the so called Bellman function technique. This approach was first introduced in the work of Nazarov and Treil, \cite{6}, where the authors defined the function  
\begin{multline}\label{eq:1p4}
B_p(f,F,L) =\\ \sup\left\{ \frac{1}{|Q|} \int_Q (\mathcal{M}_d \phi)^p : \frac{1}{|Q|}\int_Q\phi=f,\ \frac{1}{|Q|}\int_Q\phi^p=F, \sup_{R:Q\subseteq R}\frac{1}{|R|}\int_R\phi=L\right\},
\end{multline}
with $p>1$ (as an example they examine the case $p=2$), where $Q$ is a fixed dyadic cube and $\phi$ is non negative in $L^p(Q),$ $R$ runs over all dyadic cubes containing $Q$ and the variables $F, f, L$ satisfy $0\leq f\leq L,$ $f^p\leq F.$ Exploiting a certain "pseudoconcavity" inequality it satisfies, they construct
the function $4F-4fL+2L^2$ which has the same properties as \eqref{eq:1p5} and provides a good $L^p$ bound for the operator $\mathcal{M}_d$ (see \cite{6} for details).

The exact value of the above Bellman function was explicitly computed for the first time by Melas in \cite{4}. 
In fact this was done in the much more general setting of a non-atomic probability space $(X,\mu)$ 
equipped with a tree structure $\mc T$, which is similar to the structure of the dyadic subcubes of 
$[0,1]^n$ (see the definition in Section \ref{sec:2}). Then, the associated maximal operator is defined
by
\begin{equation} \label{eq:1p5}
\mt\phi(x) = \sup\left\{ \frac{1}{\mu(I)}\int_I |\phi|\,\mr d\mu: x\in I\in \mc T\right\},
\end{equation}
for every $\phi\in L^1(X,\mu)$.
Moreover \eqref{eq:1p2} and \eqref{eq:1p3} still hold in this setting and remain sharp. 

The Bellman function of two variables related to the above maximal operator is given by
\begin{equation} \label{eq:1p6}
B_{\mc T}^{(p)}(f,F) = \sup\left\{ \int_X(\mt\phi)^p\,\mr d\mu: \phi\geq 0,\ \int_X\phi\,\mr d\mu = f,\ \int_X\phi^p\,\mr d\mu = F\right\},
\end{equation}
where $0< f^p\leq F$.
In \cite{3} it is proved that
\begin{equation}\label{eq:1p7}
B_{\mc T}^{(p)}(f,F) = F\,\omega_p\!\left(\frac{f^p}{F}\right)^p,
\end{equation}
where $\omega_p : [0,1] \to \bigl[1,\frac{p}{p-1}\bigr]$, is defined by $\omega_p(z) = H_p^{-1}(z)$, 
and $H_p(z)$ is given by $H_p(z) = -(p-1)z^p + pz^{p-1}$. As a consequence $B_{\mc T}^{(p)}(f,F)$ does 
not depend on the tree $\mc T$.
The technique for the evaluation of \eqref{eq:1p6}, that is used in \cite{4}, is based on an effective 
linearization of the dyadic maximal operator that holds on an adequate class of functions, which is enough to describe the problem as is settled on 
\eqref{eq:1p6}. Using this result on suitable subsets of $X$ and several calculus arguments, the 
author also managed to precisely evaluate the corresponding to \eqref{eq:1p4} Bellman function in this context,
\begin{multline}\label{eq:1p8}
B_{\mc T}^{p}(f,F,L) = \sup\bigg\{\int_X (\max(\mathcal{M}_{\mc T} \phi, L)^p \ud \mu :\phi\geq0, \phi \in  L^p(X,\mu),\\ \int_X\phi \ud \mu=f, \int_X\phi^p \ud \mu=F, \bigg\}.
\end{multline}

We wish to refine \eqref{eq:1p3} even further. So we consider the $q$-norm, $1<q<p,$ of the function $\phi$ as fixed and aim to compute the corresponding Bellman function:
\begin{multline}\label{eq:1p9}
B_{\mc T}^{p,q}(f,A,F) = \sup\bigg\{\int_X (\mathcal{M}_{\mc T} \phi)^p \ud \mu :\phi\geq0, \phi \in  L^p(X,\mu),\\ \int_X\phi \ud \mu=f, \int_X\phi^q \ud \mu=A, \int_X\phi^p \ud \mu=F, \bigg\},
\end{multline}
where $1<q<p,$ and for $f, A, F$ we have $f^q < A < F^{\frac{q}{p}}.$ The new integral variable makes 
the problem considerably more difficult and thus makes it interesting to compute this Bellman function 
in a sub-domain of its original domain. We state our result in Theorem \ref{thm:1} below. The proof is an application of techniques presented in \cite{4}. So, with $\omega_p$ as defined above (and $\omega_q$ in an analogous way), in Section 3 we prove the following.
\begin{cthm}\label{thm:1} 
For $f, A$ such that $0<f^q<A$ and $F=F(f,A)$ satisfying $\omega_p(\frac{f^p}{F})=\omega_q(\frac{f^q}{A}),$ we have that
\begin{equation}\label{eq:1p10}
B_{\mc T}^{p,q}(f,A,F)=\omega_q\bigg(\frac{f^q}{A}\bigg)^pF.
\end{equation} 
\end{cthm}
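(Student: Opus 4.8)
\emph{Proof plan.}

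\textbf{The upper bound.} This direction needs no new idea. Every $\phi$ admissible for $B_{\mc T}^{p,q}(f,A,F)$ is a fortiori admissible for $B_{\mc T}^{(p)}(f,F)$, since passing from the latter supremum to the former only imposes the extra constraint $\int_X\phi^q\ud\mu=A$. Hence, by \eqref{eq:1p7} and the hypothesis $\omega_p(f^p/F)=\omega_q(f^q/A)$,
\[
B_{\mc T}^{p,q}(f,A,F)\ \le\ B_{\mc T}^{(p)}(f,F)\ =\ F\,\omega_p\!\left(\tfrac{f^p}{F}\right)^{p}\ =\ F\,\omega_q\!\left(\tfrac{f^q}{A}\right)^{p}.
\]
So the whole content of the theorem is the reverse inequality: for each $\varepsilon>0$ one must exhibit a nonnegative $\phi\in L^p(X,\mu)$ with $\int_X\phi\ud\mu=f$, $\int_X\phi^q\ud\mu=A$, $\int_X\phi^p\ud\mu=F$ and $\int_X(\mathcal{M}_{\mc T}\phi)^p\ud\mu\ge F\,\omega_q(f^q/A)^p-\varepsilon$.

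\textbf{The lower bound.} The plan is to take the extremal sequences that establish sharpness of \eqref{eq:1p7} for the exponent $q$ (built in \cite{3} by the methods of \cite{4}) and to check that, on this sub-domain, they control all three moments at once. Put $\beta:=\omega_q(f^q/A)=\omega_p(f^p/F)$; since $0<f^q<A$ we have $\beta>1$, and since $F$ is finite we have $f^p/F=H_p(\beta)>0$, which forces $\beta<\tfrac{p}{p-1}$, so that $r:=\beta/(\beta-1)>p$. The extremizers attached to $B_{\mc T}^{(q)}(f,A)$ realize, after a self-similar spreading through the branches of $\mc T$ and in the limit, the power profile $\lambda\,x^{-1/r}$ with $\lambda=f/\beta$, and their decisive feature is that $\mathcal{M}_{\mc T}\phi$ equals, up to a negligible correction, the constant multiple $\beta\,\phi$ of $\phi$. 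Granting this, every quantity in sight reduces to one power integral: besides the admissibility constraints $\int_X\phi\ud\mu=f$ and $\int_X\phi^q\ud\mu=A$ one finds $\int_X\phi^p\ud\mu\to f^p/H_p(\beta)$ and $\int_X(\mathcal{M}_{\mc T}\phi)^p\ud\mu\to\beta^p\,f^p/H_p(\beta)$. Using $H_p(\beta)=\beta^{p-1}\bigl(p-(p-1)\beta\bigr)$ one checks that $f^p/H_p(\beta)$ is exactly the value $F=F(f,A)$ of the statement, i.e. that $\omega_p(f^p/F)=\beta$ is automatically satisfied; consequently $\int_X(\mathcal{M}_{\mc T}\phi)^p\ud\mu\to F\beta^p=F\,\omega_q(f^q/A)^p$, which is the bound sought. (Equivalently, one can run the whole Bellman induction of \cite{4} directly for the three-variable functional; the effective linearization of $\mathcal{M}_{\mc T}$ reduces it to the same one-dimensional optimization, and the $\omega_q$ in the answer is dictated by the $L^q$ constraint.)

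\textbf{The main obstacle.} Two points in the lower bound require genuine work. First, the calculation just sketched is really performed with the Hardy-type averaging operator $h\longmapsto\bigl(t\mapsto\tfrac1t\int_0^t h\bigr)$ rather than with $\mathcal{M}_{\mc T}$: for a bare power function on $[0,1]$ the \emph{dyadic} maximal operator does not reproduce the multiple $\beta\,\phi$, so one must instead construct bona fide functions on the tree that attain the same four limiting integrals — distributing the profile over a nested family of tree elements with prescribed measure ratios, repeating the pattern inside every branch, and estimating $\int_X\phi\ud\mu$, $\int_X\phi^q\ud\mu$, $\int_X\phi^p\ud\mu$ and $\int_X(\mathcal{M}_{\mc T}\phi)^p\ud\mu$ as explicitly summable series. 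I expect the accounting of the $L^q$ integral, and the verification that it is compatible with the other two constraints — where exactly the relation $\omega_p(f^p/F)=\omega_q(f^q/A)$ is forced — to be the most delicate part. Second, the construction attains $\int_X\phi^p\ud\mu=F$ only in the limit, so the argument ends by perturbing $\phi$ on a set of vanishingly small measure, restoring the three moments to $(f,A,F)$ exactly at a cost of $o(1)$ in $\int_X(\mathcal{M}_{\mc T}\phi)^p\ud\mu$; this places $\phi$ inside the admissible class for $B_{\mc T}^{p,q}$, and together with the upper bound yields \eqref{eq:1p10}.
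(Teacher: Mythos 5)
Your proposal is correct and follows essentially the same route as the paper: the upper bound via $B_{\mc T}^{p,q}(f,A,F)\le B_{\mc T}^{(p)}(f,F)$ together with \eqref{eq:1p7} is exactly the paper's argument, and your lower bound — extremal sequences for the $q$-variable problem built self-similarly on the tree, satisfying $\mathcal{M}_{\mc T}\phi\ge z\phi$ with $z\to\omega_q(f^q/A)$ and $\int_X\phi^p\ud\mu\to f^p/H_p(\omega_q(f^q/A))=F(f,A)$ — is precisely the construction the paper carries out with the functions $\varphi_\alpha$ of \eqref{eq:3p10}. Your closing remark about restoring the third moment exactly (rather than only in the limit) is, if anything, more careful than the paper, which passes to the limit in \eqref{eq:3p13} without comment.
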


Regarding the general case now, in Section 4 we provide an upper bound for \eqref{eq:1p9} (Lemma \ref{lem:4p2}). As a first step towards this, we prove an inequality satisfied by the corresponding maximal operator (Lemma \ref{lem:4p1}) which can be interpreted as the basic inequality proved in \cite{3}, for the case $\beta=0.$

We remark here that there are several problems in Harmonic Analysis were Bellman functions arise.
Such problems (including the dyadic Carleson imbedding theorem and weighted inequalities) are described
in \cite{8} (see also \cite{6}, \cite{7}) and also connections to Stochastic Optimal Control are 
provided, from which it follows that the corresponding Bellman functions satisfy certain nonlinear
second-order PDEs. The exact evaluation of a Bellman function is a difficult task which is connected
with the deeper structure of the corresponding Harmonic Analysis problem. Until now several Bellman 
functions have been computed (see \cite{1}, \cite{4}, \cite{5} \cite{6}, \cite{13}, \cite{14}, \cite{15}). The exact computation of \eqref{eq:1p6} has also been given in \cite{12} by L. Slavin,
A. Stokolos and V. Vasyunin, which linked the computation of it to solving certain PDEs of the 
Monge-Amp\`{e}re type, and in this way they obtained an alternative proof of the results in \cite{4} 
for the Bellman function related to the dyadic maximal operator. Also in \cite{16}, using the Monge-
Amp\`{e}re equation approach, a more general Bellman function than the one related to the Carleson 
imbedding theorem has been precisely evaluated thus generalizing the corresponding result in \cite{4}. 
It would be an interesting problem to discover if the Bellman function of three variables defined in 
\eqref{eq:1p9} can be computed using such PDE-based methods.

\section{Preliminaries}\label{sec:2}

In this section we present the background we need from \cite{4}, that will be used in all that follows.

Let $(X,\mu)$ be a non-atomic probability space. Two measurable subsets A, B of $X$ will be called
almost disjoint if $\mu(A\cap B)=0$.
\begin{definition}\label{definition:2p1}
A set $\mathcal{T}$ of measurable subsets of $X$ will be called a tree if the following conditions are
satisfied:
\begin{enumerate}
\item[(i)] $X \in \mathcal{T}$ and for every $I \in \mathcal{T}$ we have $\mu(I)>0$.
\item[(ii)]For every $I\in \mathcal{T}$ there corresponds a finite or countable subset
$\mathcal{C}(I) \subseteq \mathcal{T}$ containing at least two elements such that:
\begin{itemize}
\item[(a)]the elements of $\mathcal{C}(I)$ are pairwise almost disjoint subsets of $I$,
\item[(b)]$I=\bigcup \mathcal{C}(I)$.
\end{itemize}
\item[(iii)]$\mathcal{T}= \bigcup_{m\geq 0}\mathcal{T}_{m}$ where $\mathcal{T}_{(0)}=\{X\}$ and
$\mathcal{T}_{(m+1)}=\bigcup_{I\in \mathcal{T}_{(m)}} \mathcal{C}(I)$.
\item[(iv)]We have $\lim_{m \to \infty} \sup_{I \in \mathcal{T}_{(m)}}\mu(I)=0$.
\end{enumerate}
\end{definition}

By induction it can be seen that each family $\mathcal{T}_{(m)}$ consists of pairwise almost disjoint
sets whose union is $X$. Moreover if $x \in X\setminus E(\mathcal{T})$ then for each $m$ there exists
exactly
one $I_{m}(x)$ in $\mathcal{T}_{(m)}$ containing x. For every $m>0$ there is a
$J \in \mathcal{T}_{(m-1)}$ such that $I_{m}(x) \in \mathcal{C}(J)$. Then, since $x \in J,$ we must have
$J=I_{m-1}(x)$, because $x$ does not belong to $E(\mathcal{T})$. Hence the set $\mathcal{A}=\{I \in \mathcal{T}:x \in I\}$ forms a chain
$I_{0}(x)=X\varsupsetneq I_{1}(x)\varsupsetneq \dots$ with $I_{m} \in \mathcal{C}(I_{m-1}(x))$
for every $m>0$. From this remark it follows that if $I, J \in \mathcal{T}$ and
$I \cap J \cap (X\setminus E(\mathcal{T}))$ is non-empty, then $I\subseteq J$ or $J \subseteq I$. In
particular for any $I, J \in \mathcal{T}$, either $\mu(I\cap J)=0$ or one of them is contained in the
other. \\

We will also need the following.
\begin{lemma}\label{lem:2p1}
For every $I \in \mathcal{T}$ and every $a$ such that $0<a<1$ there exists a subfamily
$\mathcal{F}(I)\subseteq \mathcal{T}$ consisting of pairwise almost disjoint subsets of $I$ such that
\begin{equation}\label{eq:2p2}
\mu \left(\bigcup_{J \in \mathcal{F}(I)}J \right) = \sum_{J \in \mathcal{F}(I)}\mu(J)=
(1-a)\mu(I).
\end{equation}
\end{lemma}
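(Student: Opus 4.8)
The plan is to build $\mathcal{F}(I)$ constructively by a greedy descent through the tree. Write $\alpha=(1-a)\mu(I)$; since $0<a<1$ and $\mu(I)>0$ we have $0<\alpha<\mu(I)$. The idea is to start at $I$, enumerate its children, and keep putting children into $\mathcal{F}(I)$ until adding the next one would push the accumulated mass past $\alpha$; at that moment we recurse into that single ``overshooting'' child to collect the still-missing mass, and repeat. Condition (iv) of Definition \ref{definition:2p1} is what we will invoke to guarantee that, should this descent never terminate, the mass it fails to collect tends to $0$, so the total comes out to be exactly $\alpha$.

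More precisely, I would produce inductively a strictly decreasing chain $K_0=I\supsetneq K_1\supsetneq K_2\supsetneq\cdots$ in $\mathcal{T}$ together with residuals $\alpha=\alpha_0\ge\alpha_1\ge\cdots\ge 0$ satisfying $0\le\alpha_n<\mu(K_n)$ at every stage. Given $K_n$ and $\alpha_n$: enumerate $\mathcal{C}(K_n)=\{J^n_1,J^n_2,\dots\}$; since these are pairwise almost disjoint with union $K_n$, we have $\sum_k\mu(J^n_k)=\mu(K_n)>\alpha_n$, so there is a least (necessarily finite) index $N_n$ with $\sum_{k\le N_n}\mu(J^n_k)>\alpha_n$. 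I place $J^n_1,\dots,J^n_{N_n-1}$ into $\mathcal{F}(I)$ and set $K_{n+1}=J^n_{N_n}$, $\alpha_{n+1}=\alpha_n-\sum_{k<N_n}\mu(J^n_k)$. Minimality of $N_n$ forces $0\le\alpha_{n+1}<\mu(J^n_{N_n})=\mu(K_{n+1})$, so the induction continues; if ever $\alpha_{n+1}=0$, I stop and let $\mathcal{F}(I)$ be the finite family collected so far, which visibly satisfies \eqref{eq:2p2}.

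If the recursion never halts, $\mathcal{F}(I)=\bigcup_{n\ge 0}\{J^n_1,\dots,J^n_{N_n-1}\}$. It consists of pairwise almost disjoint subsets of $I$: within a single stage this is condition (ii) of Definition \ref{definition:2p1}, while any set chosen at stage $n$ is almost disjoint from $K_{n+1}$ (both being distinct elements of $\mathcal{C}(K_n)$) and hence from everything chosen at all later stages, since those are subsets of $K_{n+1}$. Moreover $\sum_{J\in\mathcal{F}(I)}\mu(J)=\sum_{n\ge 0}\bigl(\alpha_n-\alpha_{n+1}\bigr)=\alpha-\lim_n\alpha_n$. Since $K_{n+1}\in\mathcal{C}(K_n)$, if $I\in\mathcal{T}_{(m_0)}$ then $K_n\in\mathcal{T}_{(m_0+n)}$, so condition (iv) gives $\mu(K_n)\le\sup_{J\in\mathcal{T}_{(m_0+n)}}\mu(J)\to 0$, whence $0\le\alpha_n<\mu(K_n)\to 0$. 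Therefore $\sum_{J\in\mathcal{F}(I)}\mu(J)=\alpha=(1-a)\mu(I)$, and almost disjointness gives $\mu\bigl(\bigcup_{J\in\mathcal{F}(I)}J\bigr)=\sum_{J\in\mathcal{F}(I)}\mu(J)$ as well, which is exactly \eqref{eq:2p2}.

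The one genuinely delicate point is the convergence bookkeeping at the very end: showing that the residual $\alpha_n$ really decays to $0$ rather than stabilizing at a positive value. This is precisely what condition (iv) of Definition \ref{definition:2p1} is for — without a fineness hypothesis of this kind the statement would simply be false. Everything else (the pairwise almost disjointness and the countable additivity of $\mu$ over the selected family) is routine once the construction is in place, and the possibility that some $\mathcal{C}(K_n)$ happens to be finite only makes the choice of $N_n$ easier rather than harder.
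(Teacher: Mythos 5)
Your construction is correct: the greedy descent keeps the residual $\alpha_n$ strictly below $\mu(K_n)$ at every stage, and condition (iv) of Definition \ref{definition:2p1} forces $\mu(K_n)\to 0$ along the chain $K_n\in\mathcal{T}_{(m_0+n)}$, so the residual vanishes and the collected family has total measure exactly $(1-a)\mu(I)$; the cross-stage almost disjointness via $K_{n+1}$ is also handled properly. The paper itself states Lemma \ref{lem:2p1} without proof (deferring to \cite{4}), and your argument is essentially the standard exhaustion proof given there, so there is nothing to flag.
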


The last thing we will use is the following Lemma. Let $\omega_q$ be as defined in the Introduction (note that the subscript is q instead
of p).
\begin{lemma}\label{lem:2p2}
Let $q>1$ and $\tau \in (0, 1]$ be fixed. Then for every $\alpha$ with $0<\alpha<1$ the equation
\begin{equation}\label{eq:2p9}
-(z-\alpha)^q + (1-\alpha)^{q-1}z^q=\tau\alpha(1-\alpha)^{q-1}
\end{equation}
has a unique solution $z=z(\alpha, \tau) \in [1, \infty)$ and moreover
\begin{equation}\label{eq:2p10}
\lim_{\alpha \to 0^+}z(\alpha, \tau)=\omega_q(\tau).
\end{equation}
\end{lemma}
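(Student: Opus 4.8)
\emph{Proposed approach.} The plan is to split the statement into the algebraic part (existence and uniqueness of the solution) and the asymptotic part (the limit as $\alpha\to0^+$), and for the latter to combine a uniform a priori bound on $z(\alpha,\tau)$ with a first‑order Taylor expansion in $\alpha$ that brings out the function $H_q(z)=-(q-1)z^q+qz^{q-1}$ defining $\omega_q$.

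For existence and uniqueness I would fix $\alpha\in(0,1)$ and study $g_\alpha(z)=-(z-\alpha)^q+(1-\alpha)^{q-1}z^q-\tau\alpha(1-\alpha)^{q-1}$ on $[1,\infty)$. A short computation gives $g_\alpha(1)=\alpha(1-\tau)(1-\alpha)^{q-1}\ge0$, while $g_\alpha(z)=z^q\bigl[(1-\alpha)^{q-1}-(1-\alpha/z)^q\bigr]-\tau\alpha(1-\alpha)^{q-1}\to-\infty$ as $z\to\infty$, since $(1-\alpha)^{q-1}<1$ forces the bracket to a negative limit. For monotonicity I would note $g_\alpha'(z)=q\bigl[(1-\alpha)^{q-1}z^{q-1}-(z-\alpha)^{q-1}\bigr]$, and because $q-1>0$ and $z-\alpha>0$ on $[1,\infty)$, the sign of $g_\alpha'(z)$ equals that of $(1-\alpha)z-(z-\alpha)=\alpha(1-z)$, hence $g_\alpha'<0$ on $(1,\infty)$. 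Thus $g_\alpha$ is continuous and strictly decreasing on $[1,\infty)$ with $g_\alpha(1)\ge0>\lim_{z\to\infty}g_\alpha(z)$, which yields a unique zero $z(\alpha,\tau)\in[1,\infty)$ (equal to $1$ exactly when $\tau=1$).

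For the limit I would first extract a bound that is uniform in $\alpha$: since $\tau>0$, the identity $g_\alpha(z(\alpha,\tau))=0$ forces $(1-\alpha)^{q-1}z(\alpha,\tau)^q>(z(\alpha,\tau)-\alpha)^q$, i.e.\ $(1-\alpha)^{(q-1)/q}>1-\alpha/z(\alpha,\tau)$; combining this with Bernoulli's inequality $(1-\alpha)^{(q-1)/q}\le1-\frac{q-1}{q}\alpha$ gives $z(\alpha,\tau)<\frac{q}{q-1}$ for every $\alpha\in(0,1)$. Next, dividing \eqref{eq:2p9} by $\alpha$ and setting $\Phi(z,\alpha)=\frac{z^q-(z-\alpha)^q}{\alpha}+z^q\frac{(1-\alpha)^{q-1}-1}{\alpha}$ for $\alpha>0$ and $\Phi(z,0)=H_q(z)$, the mean value theorem applied to $t\mapsto t^q$ and $t\mapsto t^{q-1}$ shows that $\Phi$ is jointly continuous on $[1,\frac{q}{q-1}]\times[0,\frac12]$ and that $\Phi(\cdot,\alpha)\to H_q$ uniformly on $[1,\frac{q}{q-1}]$ as $\alpha\to0^+$; equation \eqref{eq:2p9} then reads $\Phi(z(\alpha,\tau),\alpha)=\tau(1-\alpha)^{q-1}$. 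Given any sequence $\alpha_n\to0^+$, the bound lets me pass to a subsequence along which $z(\alpha_{n_k},\tau)\to z^*\in[1,\frac{q}{q-1}]$, and joint continuity of $\Phi$ gives $H_q(z^*)=\tau$; since $H_q'(z)=q(q-1)z^{q-2}(1-z)$ makes $H_q$ strictly decreasing on $[1,\infty)$ with $H_q\bigl([1,\frac{q}{q-1}]\bigr)=[0,1]$, necessarily $z^*=\omega_q(\tau)$. As every subsequential limit equals $\omega_q(\tau)$, \eqref{eq:2p10} follows.

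I expect the asymptotic part to be the only genuine obstacle: the formal expansion $-(z-\alpha)^q+(1-\alpha)^{q-1}z^q=\alpha H_q(z)+O(\alpha^2)$ is of no use until one knows that $z(\alpha,\tau)$ cannot escape to infinity, so the Bernoulli bound, which confines $z(\alpha,\tau)$ to the fixed compact interval $[1,\frac{q}{q-1}]$, is the linchpin that makes the expansion uniform and the passage to the limit legitimate. An implicit‑function‑theorem argument is a possible alternative, but verifying the regularity of $\Phi$ across $\alpha=0$ would be more cumbersome than the compactness argument sketched above.
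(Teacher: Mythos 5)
Your proof is correct and complete: the monotonicity of $g_\alpha$ on $[1,\infty)$, the sign of $g_\alpha(1)$, the Bernoulli-type uniform bound $z(\alpha,\tau)<\tfrac{q}{q-1}$, and the passage to the limit via $\Phi(\cdot,\alpha)\to H_q$ all check out. The paper itself states this lemma without proof, importing it from \cite{4}, where the analogous fact (for the exponent $p$) is established by essentially the same scheme — monotonicity in $z$ for existence and uniqueness, an a priori bound, and a limit argument identifying $H_q$ — so your argument matches the standard one.
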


\section{Computation of the Bellman function on a sub-domain}\label{sec:3}

Following \cite{4}, we choose $\alpha$ with $0<\alpha <1$ and using Lemma
\ref{lem:2p1}, for every $I \in \mathcal{T}$ we choose a family $\mathcal{F}(I)\subseteq \mathcal{T}$
of pairwise almost disjoint subsets of $I$ such that
\begin{equation}\label{eq:3p1}
\mu \left(\bigcup_{J \in \mathcal{F}(I)}J \right) = \sum_{J \in \mathcal{F}(I)}\mu(J)=
(1-a)\mu(I).
\end{equation}
Then we define $\mathcal{S}=\mc S_\alpha$ to be the smallest subset of $ \mathcal{T}$ such that $X \in
\mathcal{S}$ and for every $I \in \mathcal{S}$, $\mathcal{F}(I) \subseteq \mathcal{S}$ and
the correspondence $I \to I^*$ with respect to this $\mathcal{S},$ by setting $J^*=I \in \mathcal{S}$
if and only if $J \in \mathcal{F}(I)$ and so writing
\begin{equation}\label{eq:3p2}
A_I =I \setminus \bigcup_{J \in \mathcal{S}:\; J^*=I}J,
\end{equation}
we have $a_I=\mu(A_I)=\mu (I) - \sum_{J \in \mathcal{S}:\; J^*=I}\mu (J)=\alpha\mu(I)$ for every
$I \in \mathcal{S}$.
Also it is easy to see that
\begin{equation}\label{eq:3p3}
\mathcal{S}=\bigcup_{m \geq 0}\mathcal{S}_{(m)}, \ \text{where} \ \mathcal{S}_{(0)}=\{X\} \ \text{and} \
\mathcal{S}_{(m+1)}=\bigcup_{I \in \mathcal{S}_{(m)}}\mathcal{F}(I).
\end{equation}
The rank $r(I)$ of any $I \in \mathcal{S}$ is now defined to be the unique integer $m$ such that
$I \in \mathcal{S}_{(m)}$.

For $\alpha \in (0,1)$ and the family $\mathcal{S}=\mc S_\alpha$ defined as above,
we set
\begin{equation}\label{eq:3p4}
x_I =\lambda\gamma^{r(I)}\mu(I)^{1/q}
\end{equation}
where $\lambda=f\alpha^{-1/\acute{q}}(1-\gamma(1-\alpha)), \; 1/q +1/\acute{q} = 1,  \; \gamma=\frac{\beta+1}{\beta+1-\beta\alpha}, \; \beta >0.$
For every $I \in \mathcal{S}$ and every $m \geq 0$ we write
\begin{equation}\label{eq:3p5}
b_m(I)= \sum_{\substack{\mathcal{S}\ni J \subseteq I \\ r(J)=r(I)+m}}\mu(J)
\end{equation}
and observing that
\begin{equation}\label{eq:3p6}
b_{m+1}(I)=\sum_{\substack{\mathcal{S}\ni J \subseteq I, \\ r(J)=r(I)+m}}
\sum_{L \in \mathcal{F}(J)}\mu(L)=(1-\alpha)b_m(I)
\end{equation}
we get
\begin{equation}\label{eq:3p7}
b_m(I)=(1-\alpha)^m \mu(I).
\end{equation}

Hence 
\begin{align}\label{eq:3p8}
\sum_{I \in \mathcal{S}_\alpha}x_I^q 
&=\lambda^q\sum_{m\geq 0}\sum_{I \in \mc{S}_{(m)}}\gamma^{mq}\mu(I)
=\lambda^q\sum_{m\geq 0}\gamma^{mq}b_m(X)\nonumber \\
&=\lambda^q\sum_{m\geq 0}[\gamma^q(1-\alpha)]^m
=\frac{\lambda^q}{1-\gamma^q(1-\alpha)},
\end{align} 
assuming that $\gamma^q(1-\alpha)<1$. Additionally,
\begin{align}\label{eq:3p9}
a_I^{1/q} y_I & = \frac{1}{\mu(I)}\sum_{\substack{J \in \mathcal{S} \\ J
\subseteq I}}a_I^{1/q}a_J^{1/\acute{q}}x_J \nonumber \\
&=\frac{\lambda}{\mu(I)}\sum_{\substack{J \in \mathcal{S} \\ J
\subseteq I}}(\alpha \mu(I))^{1/q}(\alpha \mu(J))^{1/\acute{q}}\gamma^{r(J)}\mu(J)^{1/q}\nonumber \\
&=\frac{\alpha\lambda}{\mu(I)^{1-\frac{1}{q}}}\sum_{m\geq 0}\gamma^{m+r(I)}
\sum_{\substack{\mathcal{S}\ni J \subseteq I \\ r(J)=r(I)+m}}\mu(J)\nonumber \\
&=\frac{\alpha\lambda}{\mu(I)^{1-\frac{1}{q}}}\gamma^{r(I)}\sum_{m\geq 0}
\gamma^m\mu(I)(1-\alpha)^m 
=\frac{\alpha}{(1-\gamma(1-\alpha))}x_I,
\end{align}
where the $y_I$'s are defined by the first equality above.

Now we consider the function
\begin{equation}\label{eq:3p10}
\varphi_\alpha=\sum_{I \in \mathcal{S}}\frac{x_I}{a_I^{1/q}}\chi_{A_I},
\end{equation}
$I \in \mathcal{S}$. It is easy to see that
\begin{equation}\label{eq:3p11}
\int_X \phi_\alpha \ud \mu =f 
\end{equation}
and writing $z=\beta+1-\beta\alpha$ and thus $\gamma=\frac{\beta+1}  {\beta+1-\beta\alpha}=\frac{z-\alpha}{z(1-\alpha)},$ 
\begin{equation}\label{eq:3p12}
\int_X\phi_\alpha^q\,\mr d\mu=\sum_{I \in \mathcal{S}}x_I^q = \frac{f^q\alpha(1-\alpha)^{q-1}}{-(z-\alpha)^q + (1-\alpha)^{q-1}z^q}.
\end{equation}
    
We are now in a position to prove Theorem \ref{thm:1}.
Let $\phi_\alpha$ be as in \eqref{eq:3p10} and $z=z(\alpha,\frac{f^q}{A})$ the solution provided by Lemma \ref{lem:2p2} which makes $\int_X\phi_\alpha^q\,\mr d\mu=A$ (and also
$\gamma^q(1-\alpha)=\frac{(z-\alpha)^q}{z^q(1-\alpha)^{q-1}}=1-\frac{\alpha f^q}{z^qA}<1$).
Set $F(\alpha)=\int_X \phi_\alpha^p \ud \mu = \sum_{I \in \mathcal{S}_\alpha}a_I^{-\frac{p-q}{q}} x_I^p.$ Since
\begin{equation*}
\mt \phi_\alpha \geq \sum_{I\in\mathcal{S}}(\frac{1}{\mu(I)}\int_I\phi_\alpha \ud\mu) \chi_{A_I} =
\sum_{I\in\mathcal{S}}\frac{1}{\mu(I)}\big(\sum_{\substack{J\in \mathcal{S} \\J \subseteq I}}
a_J^{1/\acute{q}}x_J \big)\chi_{A_I},
\end{equation*}
from \eqref{eq:3p9} we have
\begin{equation}\label{eq:3p13}
\int_X (\mt \phi_\alpha)^p \ud \mu \geq \sum_{I\in\mathcal{S}}a_I y_{I}^p = z^{p}F(\alpha).
\end{equation}
From Lemma \ref{lem:2p2}, $z \to \omega_q(\frac{f^q}{A})$ and, arguing as in \eqref{eq:3p8}, $$F(\alpha)
=\frac{f^p\alpha(1-\alpha)^{p-1}}{-(z-\alpha)^p+(1-\alpha)^{p-1}z^p}
\to \frac{f^p}{H_p(\omega_q(\frac{f^q}{A}))}=F(f,A)$$ as $\alpha \to 0^+.$
So taking limits in \eqref{eq:3p13}, we conclude that
\begin{equation}\label{eq:3p14}
B_{\mc T}^{p,q}(f,A,F(f,A))\geq \omega_q(\frac{f^q}{A})^p F(f,A).
\end{equation}
To see that the converse inequality also holds, we use \eqref{eq:1p7}, our hypothesis
on the relation between $f, A, F,$ and the inequality
$$B_{\mc T}^{p,q}(f,A,F) \leq B_{\mc T}^{(p)}(f,F),$$ which holds by the corresponding definitions.
So finally, 
\begin{equation}\label{eq:3p15}
B_{\mc T}^{p,q}(f,A,F(f,A))=\omega_q(\frac{f^q}{A})^p F(f,A).
\end{equation}
which is what we aimed to prove.

\section{Upper bound for \eqref{eq:1p9} }\label{sec:4}

We first prove an inequality (Lemma \ref{lem:4p1}) satisfied by the maximal operator defined in 
\eqref{eq:1p5}. For this proof we use a variation of the approach that arises in \cite{9}, as can be 
seen below. An application of this, in case one also considers the $L^p$-norm of $\phi$ as given, provides us with an upper bound to \eqref{eq:1p9}. This is the content of Lemma \ref{lem:4p2}.

\begin{lemma}\label{lem:4p1}
If $I=\int_X (\mathcal{M}_{\mathcal{T}}\phi)^p \ud \mu$ and $\int_X\phi\,\mr d\mu=f,$ $\int_X\phi^q\,\mr d\mu=A$ then,
\begin{equation*}
I \leq f^p- \frac{p}{p-q}f^{p-q}A +
\frac{p}{p-q}\int_X (\mathcal{M}_{\mathcal{T}}\phi)^{p-q}\phi^q \ud \mu.
\end{equation*}
\end{lemma}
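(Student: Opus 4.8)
The plan is to pass to the distribution function of $M:=\mathcal{M}_{\mathcal{T}}\phi$ and to show that, after one exact cancellation, the claimed inequality is equivalent to a pointwise‑in‑$\lambda$ estimate for the super‑level sets of $M$; that estimate in turn follows from a Calder\'on--Zygmund type decomposition together with Jensen's inequality.

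First I would record that, since $X\in\mathcal{T}$ and $\mu(X)=1$, one has $M(x)\ge\frac{1}{\mu(X)}\int_X\phi\ud\mu=f$ for every $x\in X$; hence, writing $E_\lambda=\{M>\lambda\}$, we have $E_\lambda=X$ for every $\lambda<f$. Using the identity $t^p=p\int_0^t\lambda^{p-1}\ud\lambda$ together with Fubini's theorem,
\[
I=\int_X M^p\ud\mu=f^p+p\int_f^\infty\lambda^{p-1}\mu(E_\lambda)\ud\lambda ,
\]
and, using $t^{p-q}=(p-q)\int_0^t\lambda^{p-q-1}\ud\lambda$ (here the hypothesis $q<p$ guarantees integrability near $0$) together with $\int_{E_\lambda}\phi^q\ud\mu=A$ for $\lambda<f$,
\[
\int_X M^{p-q}\phi^q\ud\mu=f^{p-q}A+(p-q)\int_f^\infty\lambda^{p-q-1}\Bigl(\int_{E_\lambda}\phi^q\ud\mu\Bigr)\ud\lambda .
\]
Substituting the second identity into the right‑hand side of the Lemma, the two occurrences of $\frac{p}{p-q}f^{p-q}A$ cancel, and the assertion reduces to
\[
\int_f^\infty\lambda^{p-q-1}\Bigl(\lambda^q\mu(E_\lambda)-\int_{E_\lambda}\phi^q\ud\mu\Bigr)\ud\lambda\le 0 .
\]

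It thus suffices to prove, for every $\lambda>f$, the pointwise bound $\lambda^q\mu(E_\lambda)\le\int_{E_\lambda}\phi^q\ud\mu$. I would fix $\lambda>f$ and consider the family $\mathcal{G}_\lambda=\bigl\{I\in\mathcal{T}:\frac{1}{\mu(I)}\int_I\phi\ud\mu>\lambda\bigr\}$; since every element of $\mathcal{T}$ lies in some $\mathcal{T}_{(m)}$, each ascending chain of ancestors is finite, so every member of $\mathcal{G}_\lambda$ is contained in a maximal element of $\mathcal{G}_\lambda$. By the properties of the tree recalled in Section~\ref{sec:2}, the maximal elements $\{I_j\}$ of $\mathcal{G}_\lambda$ are pairwise almost disjoint, and $E_\lambda=\bigcup_j I_j$ up to a $\mu$‑null set (note that $X\notin\mathcal{G}_\lambda$ because $\frac{1}{\mu(X)}\int_X\phi\ud\mu=f<\lambda$). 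On each $I_j$, Jensen's inequality for the convex function $t\mapsto t^q$ gives $\frac{1}{\mu(I_j)}\int_{I_j}\phi^q\ud\mu\ge\bigl(\frac{1}{\mu(I_j)}\int_{I_j}\phi\ud\mu\bigr)^q>\lambda^q$, whence $\int_{I_j}\phi^q\ud\mu\ge\lambda^q\mu(I_j)$; summing over $j$ yields $\int_{E_\lambda}\phi^q\ud\mu\ge\lambda^q\mu(E_\lambda)$, as needed.

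The layer‑cake computations are routine and the concluding Jensen step is immediate; the only place that requires a little care is the description of $E_\lambda$ in the abstract tree setting — the existence, almost‑disjointness, and exhaustion property of the maximal super‑level sets $I_j$ — which is where the tree axioms of Section~\ref{sec:2} do the work. I do not expect a genuine obstacle: the content of the lemma is precisely that the elementary estimate $\lambda^q\mu(E_\lambda)\le\int_{E_\lambda}\phi^q\ud\mu$, which is nothing but the weak‑type inequality combined with Jensen at the level of $q$‑th powers, already suffices to control $\int_X M^p\ud\mu$ in the stated form; this is the analogue, for $\beta=0$, of the basic inequality of \cite{3}.
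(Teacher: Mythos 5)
Your proposal is correct and follows essentially the same route as the paper: a layer‑cake decomposition of $\int_X(\mathcal{M}_{\mathcal{T}}\phi)^p\ud\mu$ split at $\lambda=f$, the refined weak‑type bound $\mu(E_\lambda)\le\lambda^{-q}\int_{E_\lambda}\phi^q\ud\mu$ obtained from the maximal decomposition of $E_\lambda$ together with convexity of $t\mapsto t^q$, and a Fubini step to recover $\int_X(\mathcal{M}_{\mathcal{T}}\phi)^{p-q}\phi^q\ud\mu$. The only cosmetic difference is that you apply Jensen on each maximal set $I_j$ separately, while the paper first proves the weak‑type $(1,1)$ inequality on all of $E_\lambda$ and then applies H\"older once on $E_\lambda$; the two are interchangeable.
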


\begin{proof}
We have
\begin{align}\label{eq:4p1}
I&=\int_{\lambda=0}^{\infty}p\lambda^{p-1}\mu(\{\mathcal{M}_{\mathcal{T}}\phi\geq \lambda\})\ud\lambda
\nonumber \\&=
\int_{\lambda=0}^{f}p\lambda^{p-1}\mu(\{\mathcal{M}_{\mathcal{T}}\phi\geq \lambda\})\ud\lambda +
\int_{\lambda=f}^{\infty}p\lambda^{p-1}\mu(\{\mathcal{M}_{\mathcal{T}}\phi\geq \lambda\})\ud\lambda
\nonumber \\ &=I_1+I_2.
\end{align}
Then, since $\{\mathcal{M}_{\mathcal{T}}\phi\geq \lambda\}=X$, for every $\lambda<f$ (because
$\mathcal{M}_{\mathcal{T}}\phi(x)\geq f$, for every $x \in X$), we have
\begin{equation}\label{eq:4p2}
I_1=\int_{\lambda=0}^{f}p\lambda^{p-1}\ud \lambda =f^p
\end{equation}
and
\begin{equation}\label{eq:4p3}
I_2=\int_{\lambda=f}^{\infty}p\lambda^{p-1}\mu(\{\mathcal{M}_{\mathcal{T}}\phi\geq \lambda\})\ud\lambda.
\end{equation}
For $\lambda>f$ set $E_\lambda=\{\mathcal{M}_{\mathcal{T}}\phi\geq \lambda\}=\cup I_j,$ where
$I_j \in \mathcal{T}$ maximal with respect to the condition
$\frac{1}{\mu(I_j)}\int_{I_j}\phi\ud\mu \geq \lambda.$ Since by
maximality the $I_j$'s are pairwise disjoint,
$\mu(E_\lambda)=\sum\mu(I_j) \leq\sum_j\frac{1}{\lambda}\int_{I_j}\phi\ud\mu= \frac{1}{\lambda}
\int_{\cup I_j}\phi\ud\mu=\frac{1}{\lambda} \int_{E_\lambda}\phi\ud\mu.$ So
\begin{equation}\label{eq:4p4}
\mu(E_\lambda)\leq \frac{1}{\lambda} \int_{E_\lambda}\phi\ud\mu,
\end{equation}
which is the general weak-type $(1,1)$ inequality for the dyadic maximal operator. From
\eqref{eq:4p4} we conclude that $\frac{1}{\mu(E_\lambda)} \int_{E_\lambda}\phi\ud\mu \geq \lambda$ and
consequently, for $1<q<p,$ by using H\"{o}lder's inequality, we obtain
$\lambda^q \leq (\frac{1}{\mu(E_\lambda)} \int_{E_\lambda}\phi\ud\mu)^q \leq
\frac{1}{\mu(E_\lambda)} \int_{E_\lambda}\phi^q\ud\mu$ and so
\begin{equation}\label{eq:4p5}
\mu(E_\lambda)\leq\frac{1}{\lambda^q} \int_{E_\lambda}\phi^q\ud\mu.
\end{equation}
Inserting \eqref{eq:4p5} in \eqref{eq:4p3}, we get
\begin{align}\label{eq:4p6}
I_2 &\leq \int_{\lambda=f}^{\infty}p\lambda^{p-1}\frac{1}{\lambda^q} \int_{\{\mathcal{M}_{\mathcal{T}}\phi\geq \lambda\}}\phi^q\ud\mu\ud\lambda\nonumber\\&=
\int_X p\phi(x)^q\big(\int_{\lambda=f}^{\mathcal{M}_{\mathcal{T}}\phi(x)}\lambda^{p-q-1}\ud\lambda\big)\ud\mu(x)=\nonumber\\&=
\frac{p}{p-q}\int_X \phi(x)^q [\lambda^{p-q}]_{\lambda=f}^
{\mathcal{M}_{\mathcal{T}}\phi(x)}\ud\mu(x)\nonumber\\&
=\frac{p}{p-q}\int_X (\mathcal{M}_{\mathcal{T}}\phi)^{p-q}\phi^q \ud \mu
-\frac{p}{p-q}f^{p-q}\int_X\phi^q\ud\mu \nonumber\\&=
\frac{p}{p-q}\int_X (\mathcal{M}_{\mathcal{T}}\phi)^{p-q}\phi^q \ud \mu
-\frac{p}{p-q}f^{p-q}A.
\end{align}
From \eqref{eq:4p6} and \eqref{eq:4p1}, \eqref{eq:4p2} we finally get
\begin{equation}\label{eq:4p7}
I=\int_X (\mathcal{M}_{\mathcal{T}}\phi)^p \ud \mu \leq f^p- \frac{p}{p-q}f^{p-q}A +
\frac{p}{p-q}\int_X (\mathcal{M}_{\mathcal{T}}\phi)^{p-q}\phi^q \ud \mu
\end{equation}
which is the desired inequality.
\end{proof}

\begin{lemma}\label{lem:4p2}
If $\int_X\phi\,\mr d\mu=f,$ $\int_X\phi^q\,\mr d\mu=A$ and $\int_X\phi^p\,\mr d\mu=F,$ then
\begin{equation}\label{eq:4p8}
I=\int_X (\mathcal{M}_{\mathcal{T}}\phi)^p \ud \mu \leq F h^{-1}(\frac{p f^{p-q}A-(p-q)f^p}{F})^p,
\end{equation}
where $h:[1,\infty) \to (-\infty,q]$ with $h(t)=pt^{p-q}-(p-q)t^p.$
\end{lemma}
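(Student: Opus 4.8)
The plan is to feed Lemma \ref{lem:4p1} into H\"older's inequality and then untangle the resulting self-referential estimate for $I$. First I would bound the last term of Lemma \ref{lem:4p1} by H\"older's inequality with the conjugate exponents $r=\frac{p}{p-q}$ and $r'=\frac{p}{q}$ (legitimate since $q<p$, and $\frac1r+\frac1{r'}=1$):
\begin{equation*}
\int_X (\mathcal{M}_{\mathcal{T}}\phi)^{p-q}\phi^q\ud\mu
\leq\Bigl(\int_X(\mathcal{M}_{\mathcal{T}}\phi)^{p}\ud\mu\Bigr)^{\frac{p-q}{p}}
\Bigl(\int_X\phi^{p}\ud\mu\Bigr)^{\frac{q}{p}}=I^{\frac{p-q}{p}}F^{\frac{q}{p}}.
\end{equation*}
Substituting this into the inequality of Lemma \ref{lem:4p1} would give
\begin{equation*}
I\leq f^p-\frac{p}{p-q}f^{p-q}A+\frac{p}{p-q}I^{\frac{p-q}{p}}F^{\frac{q}{p}}.
\end{equation*}

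Next I would normalise. Setting $t=(I/F)^{1/p}$ and noting that $\mathcal{M}_{\mathcal{T}}\phi\geq\phi$ a.e.\ (Lebesgue differentiation along the tree, valid by condition (iv) of Definition \ref{definition:2p1}), we get $I\geq F$, hence $t\geq1$. Dividing the previous display by $F$, using $I^{\frac{p-q}{p}}F^{\frac{q}{p}}/F=t^{p-q}$ and $I/F=t^p$, and multiplying through by $p-q>0$, the estimate becomes
\begin{equation*}
pt^{p-q}-(p-q)t^p\;\geq\;\frac{pf^{p-q}A-(p-q)f^p}{F},
\end{equation*}
that is, $h(t)\geq c$ with $c:=\bigl(pf^{p-q}A-(p-q)f^p\bigr)/F$.

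Finally I would invert $h$. Since $h'(t)=p(p-q)t^{p-q-1}(1-t^q)<0$ for $t>1$, the map $h$ is strictly decreasing on $[1,\infty)$ with $h(1)=q$ and $h(t)\to-\infty$; thus $h$ is a decreasing bijection of $[1,\infty)$ onto $(-\infty,q]$ and $h^{-1}$ is decreasing. In particular $c\leq h(t)\leq h(1)=q$, so $h^{-1}(c)$ is defined, and applying $h^{-1}$ to $h(t)\geq c$ reverses the inequality to $t\leq h^{-1}(c)$. Raising to the $p$-th power and multiplying by $F$ yields $I=t^pF\leq F\,h^{-1}(c)^p$, which is precisely \eqref{eq:4p8}.

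The argument is essentially routine once Lemma \ref{lem:4p1} is in hand; the only points needing attention are verifying $t\geq1$ (equivalently $I\geq F$), so that $h$ is monotone at $t$ and can legitimately be inverted, and observing that $c$ automatically lands in the domain $(-\infty,q]$ of $h^{-1}$ — both of which follow from $\mathcal{M}_{\mathcal{T}}\phi\geq\phi$ and $h(1)=q$ as above. (Alternatively, the case $t<1$ can be dispatched directly, since then $I<F\leq F\,h^{-1}(c)^p$ because $h^{-1}(c)\geq1$.) One should also record that $I<\infty$, so that the H\"older step is meaningful, which is immediate from $\phi\in L^p(X,\mu)$ and the $L^p$-bound \eqref{eq:1p3} for $\mathcal{M}_{\mathcal{T}}$.
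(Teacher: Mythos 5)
Your proof is correct and follows essentially the same route as the paper: H\"older's inequality with exponents $\frac{p}{p-q}$ and $\frac{p}{q}$ applied to the last term of Lemma \ref{lem:4p1}, division by $F$, and inversion of the decreasing branch of $h$ on $[1,\infty)$. The only difference is that you explicitly justify $t=(I/F)^{1/p}\geq 1$ and deduce $c\leq q$ from $h(t)\geq c$, whereas the paper instead verifies $0<k(f,A,F)\leq q$ by a direct computation with $\tau=f/A^{1/q}$ and leaves the (harmless) case $t<1$ implicit — a point your parenthetical remark handles cleanly.
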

\begin{proof}
From \eqref{eq:4p7}, after using H\"older inequality for the integral on the right hand side,
we have
$$I=\int_X (\mathcal{M}_{\mathcal{T}}\phi)^p \ud \mu \leq f^p- \frac{p}{p-q}f^{p-q}A +
\frac{p}{p-q}I^\frac{p-q}{p}F^\frac{q}{p},$$
which, if we divide by F becomes
$$(p-q)\frac{I}{F} \leq \frac{(p-q)f^p-pf^{p-q}A}{F}+p\Big(\frac{I}{F}\Big)^\frac{p-q}{p}$$ and this gives
$$p\Big(\frac{I}{F}\Big)^\frac{p-q}{p}-(p-q)\Big(\frac{I}{F}\Big) \geq k(f,A,F),$$
where $k(f,A,F)=(pf^{p-q}A-(p-q)f^p)/F.$ So
$$p\Big[\Big(\frac{I}{F}\Big)^\frac{1}{p}\Big]^{p-q} - (p-q)\Big[\Big(\frac{I}{F}\Big)^\frac{1}{p}\Big]^p \geq k(f,A,F) \Rightarrow$$
\begin{equation}\label{eq:4p9}
h\Big[\Big(\frac{I}{F}\Big)^\frac{1}{p}\Big] \geq k(f,A,F).
\end{equation}
We consider now the function $h(t)=pt^{p-q}-(p-q)t^p,$ $t>0.$ We observe that
\begin{equation*}
h'(t)=p(p-q)t^{p-q-1}-p(p-q)t^{p-1}=p(p-q)t^{p-q-1}(1-t^q).
\end{equation*}
So $h$ is strictly decreasing on $[1,\infty)$, and strictly increasing on $(0,1]$. Additionally, $h(1)=q$ while
$\lim_{t\to \infty}h(t)=-\infty.$ Thus $h(t)\leq q$, for any $t>0.$ At the same time
\begin{align*}
k(f,A,F)&=\frac{pf^{p-q}A-(p-q)f^p}{F}= \frac{-(p-q)\big(\frac{f}{A^{1/q}}\big)^p +
p\big(\frac{f}{A^{1/q}}\big)^{p-q}}{F}   A^{\frac{p}{q}}\nonumber\\&=
\frac{-(p-q)\tau^p + p\tau^{p-q}}{F} A^{\frac{p}{q}}= \Big[\frac{h(\tau)}{F}\Big]A^{\frac{p}{q}}
\leq q\frac{A^{\frac{p}{q}}}{F}\leq q,
\end{align*}
where $\tau=\frac{f}{A^{1/q}}$.
So $$q\geq k(f,A,F)>0$$ and \eqref{eq:4p9} yields
\begin{equation*}
\Big(\frac{I}{F}\Big)^\frac{1}{p} \leq h^{-1}(k(f,A,F)),
\end{equation*}
which is what we wanted.
\end{proof}

Anastasios D. Delis, Eleftherios N. Nikolidakis, National and Kapodistrian University of Athens, Department of Mathematics, Panepistimioupolis, Zografou 157 84,
Athens, Greece.

\end{document}